\DeclareSymbolFont{bbold}{U}{bbold}{m}{n}
\DeclareSymbolFontAlphabet{\mathbbold}{bbold}
\newcommand{\ind}{\mathbbold{1}}
\newcommand{\C}{\mathbb{C}}
\newcommand{\N}{\mathbb{N}}
\newcommand{\R}{\mathbb{R}}
\newcommand{\A}{\textup{(A)}}
\newcommand{\Ls}{\mathcal{L}}
\newcommand{\tT}{\widetilde T}
\newcommand{\tH}{\widetilde H}
\newcommand{\tomega}{\widetilde\omega}
\newcommand{\lB}{\mkern-1mu B \mkern1mu}
\newcommand{\Proof}{\hskip-0.05em Proof }
\renewcommand{\Re}{\operatorname{Re}}
\newcommand{\from}{\colon}
\newcommand{\rfrac}[2]{\tfrac{#1}{\raisebox{0.1em}{\scriptsize$#2$}}}
\newcommand{\tup}[1]{\textup{(#1)}}
\renewcommand\le{\leqslant}
\renewcommand\ge{\geqslant}
\renewcommand{\|}{|\!|}
\newcommand{\set}[2]{\bigl\{#1\setcol#2\bigr\}}
\newcommand{\sset}[2]{\{#1\setcol#2\}}
\newcommand{\setcol}{\nobreak\mskip2mu\mathclose{}
  \mathopen{;}\penalty300\mskip6mu plus3mu minus1mu\relax}
\newcommand{\eps}{\varepsilon}
\newcommand\operator[1]{\expandafter\newcommand\csname#1\endcsname{\operatorname{#1}}}
\newtheorem{theorem}{Theorem}[section]
\newtheorem{lemma}[theorem]{Lemma}
\newtheorem{proposition}[theorem]{Proposition}
\theoremstyle{definition}
\newtheorem{remark}[theorem]{Remark}
\numberwithin{equation}{section}
\newcommand{\avoidbreak}{\postdisplaypenalty50}
\let\save@text@hyphen\-
\renewcommand*\-{\ifmmode\mkern-1.5mu\else\save@text@hyphen\fi}
\newcommand{\+}{\mkern1mu}
\renewcommand{\.}{.\nobreak\@ifnextchar1{\hskip0.18em plus0.05em minus
0.02em}{\hskip0.22em plus0.06em minus 0.04em}}
\renewcommand\section{\@startsection {section}{1}{\z@}%
                                     {-3.25ex \@plus -1ex \@minus -.2ex}%
                                     {1.5ex \@plus.2ex}%
                                     {\normalfont\large\bfseries}}
\def\env@cases{%
  \let\@ifnextchar\new@ifnextchar
  \left\lbrace
  \def\arraystretch{1.1}%
  \array{@{\,}l@{\quad}l@{}}%
}
\newcommand\MoveEqLeft[1][2]{%
  \global\@tempdima=#1em%
  \kern\@tempdima%
  &
  \kern-\@tempdima}
\newcommand*\if@single[3]{%
  \setbox0\hbox{${\mathaccent"0362{#1}}^H$}%
  \setbox2\hbox{${\mathaccent"0362{\kern0pt#1}}^H$}%
  \ifdim\ht0=\ht2 #3\else #2\fi
  }
\newcommand*\rel@kern[1]{\kern#1\dimexpr\macc@kerna}
\newcommand*\widebar[1]{\@ifnextchar^{{\wide@bar{#1}{0}}}{\wide@bar{#1}{1}}}
\newcommand*\wide@bar[2]{\if@single{#1}{\wide@bar@{#1}{#2}{1}}{\wide@bar@{#1}{#2}{2}}}
\newcommand*\wide@bar@[3]{%
  \begingroup
  \def\mathaccent##1##2{%
    \if#32 \let\macc@nucleus\first@char \fi
    \setbox\z@\hbox{$\macc@style{\macc@nucleus}_{}$}%
    \setbox\tw@\hbox{$\macc@style{\macc@nucleus}{}_{}$}%
    \dimen@\wd\tw@
    \advance\dimen@-\wd\z@
    \divide\dimen@ 3
    \@tempdima\wd\tw@
    \advance\@tempdima-\scriptspace
    \divide\@tempdima 10
    \advance\dimen@-\@tempdima
    \ifdim\dimen@>\z@ \dimen@0pt\fi
    \rel@kern{0.6}\kern-\dimen@
    \if#31
      \overline{\rel@kern{-0.6}\kern\dimen@\macc@nucleus
                \rel@kern{0.4}\kern\dimen@}%
      \advance\dimen@0.4\dimexpr\macc@kerna
      \let\final@kern#2%
      \ifdim\dimen@<\z@ \let\final@kern1\fi
      \if\final@kern1 \kern-\dimen@\fi
    \else
      \overline{\rel@kern{-0.6}\kern\dimen@#1}%
    \fi
  }%
  \macc@depth\@ne
  \let\math@bgroup\@empty \let\math@egroup\macc@set@skewchar
  \mathsurround\z@ \frozen@everymath{\mathgroup\macc@group\relax}%
  \macc@set@skewchar\relax
  \let\mathaccentV\macc@nested@a
  \if#31
    \macc@nested@a\relax111{#1}%
  \else
    \def\gobble@till@marker##1\endmarker{}%
    \futurelet\first@char\gobble@till@marker#1\endmarker
    \ifcat\noexpand\first@char A\else
      \def\first@char{}%
    \fi
    \macc@nested@a\relax111{\first@char}%
  \fi
  \endgroup
}
\begin{document}

\medmuskip=4mu plus 2mu minus 3mu
\thickmuskip=5mu plus 3mu minus 1mu
\belowdisplayshortskip=\belowdisplayskip
\newcommand\smallbds{\vskip-1\lastskip\vskip5pt plus3pt minus2pt\noindent}

\title{\Large $L_\infty$-estimates for the torsion function and \\
$L_\infty$-growth of semigroups satisfying Gaussian bounds}
\author{Hendrik Vogt%
\footnote{Fachbereich 3 -- Mathematik, Universit{\"a}t Bremen, 28359 Bremen, Germany,
+49\,421\,218-63702,
{\tt hendrik.vo\rlap{\textcolor{white}{hugo@egon}}gt@uni-\rlap{\textcolor{white}{hannover}}bremen.de}}}
\date{}

\maketitle

\begin{abstract}
We investigate selfadjoint $C_0$-semi\-groups on Euclidean domains satisfying
Gaussian upper bounds. Major examples are semigroups generated by second order
uniformly elliptic operators with Kato potentials and magnetic fields.
We study the long time behaviour of the $L_\infty$ operator norm of the semigroup.
As an application we prove a new $L_\infty$-bound for the torsion function
of a Euclidean domain that is close to optimal.

\vspace{8pt}

\noindent
MSC 2010: 35P99, 35K08, 35J25

\vspace{2pt}

\noindent
Keywords: Torsion function, Gaussian upper bounds, Schr\"odinger operators\rlap, \\
\phantom{Keywords: }%
$L_\infty$-estimates
\end{abstract}

\section{Introduction and main results}\label{sec_intro}

If $H$ is a Schr\"odinger operator in a Euclidean domain with ground state energy
$\inf\sigma(H) = 0$, then clearly the $L_2$ operator norm of $e^{-tH}$ equals $1$
for all $t\ge0$.
By duality and interpolation, the $L_\infty$ operator norm is bounded below by~$1$,
and typically one has an upper bound that grows polynomially in~$t$,
with an exponent that depends on the dimension.
This phenomenon was studied, e.g., in \cite{sim80}, \cite{dasi91}, \cite{ouh06a};
see the discussion below for more details.
In Theorem~\ref{main-sg} we give an improvement of the growth bound shown
by Ouhabaz in \cite{ouh06a}.

If $H=-\Delta_D$ is the negative Dirichlet Laplacian on a bounded Euclidean domain~$D$,
then $H$ is invertible,
and one can compare the $L_2$ and $L_\infty$ operator norms of~$H^{-1}$.
This leads to $L_\infty$-estimates for the torsion function $u_D = H^{-1}\ind$ of~$D$;
cf.\ Remark~\ref{torsion-rem}.
Following the work of van den Berg and Carroll \cite{beca09}, we prove an estimate
for $\|u_D\|_\infty$ in terms of the ground state energy $\inf\sigma(-\Delta_D)$,
with a dimension dependent constant that is close to optimal.
This is obtained as a consequence of the more general Theorem~\ref{torsion}.

\pagebreak[1]\smallskip

The following are our standing assumptions.
\begin{itemize}
\item[\A]
Let $\Omega\subseteq\R^d$ be measurable, where $d\in\N$,
and let $H$ be a selfadjoint operator in $L_2(\Omega)$ with the following properties:
\[
  E_0 := E_0(H) := \inf \sigma(H) > -\infty,
\]
and the $C_0$-semi\-group generated by $-H$ satisfies Gaussian upper bounds:
$e^{-tH}$ has an integral kernel $p_t$, for every $t>0$,
and there exist $M\ge1$,\, $\omega\in\R$ and $a>0$ such that
\begin{equation}\label{gub}
  |p_t(x,y)| \le Me^{\omega t} \cdot (a\pi t)^{-d/2} 
                 \smash{ \exp\left(\! -\frac{|x-y|^2}{at} \right) }
  \avoidbreak
\end{equation}
for all $t>0$ and a.e.\ $x,y\in\Omega$.
\end{itemize}
Expressed differently, \eqref{gub} means that $e^{-tH}$ is dominated by
$Me^{\omega t} e^{\frac{a}{4}t\Delta}$,
\begin{equation}\label{gub2}
  |e^{-tH}\-f| \le Me^{\omega t} e^{\frac{a}{4}t\Delta}|f| \qquad
  \bigl(t\ge0,\ f \in L_2(\Omega)\bigr),
  \avoidbreak
\end{equation}
where $\Delta=\Delta_{\R^d}$ denotes the Laplacian on $\R^d$
and $|f|$ is extended by $0$ to a function on all of $\R^d$.

Assumption~\A\ is satisfied, e.g., if $\Omega$ is an open set and
$H$ is a selfadjoint second order uniformly elliptic operator in divergence form
subject to Dirichlet boundary conditions; see \cite[Thm\.3.2.7]{dav89}.
If some regularity of the boundary of $\Omega$ is assumed,
then a variety of other boundary conditions are covered, such as
Neumann or Robin boundary conditions (\cite[Thm\.6.10, Prop\.4.24]{ouh05}).
Moreover, $H$ may include a magnetic field (due to the diamagnetic inequality)
and a potential from the Kato class; cf.\ \cite[Prop\.B.6.7]{sim82}.

The first topic of the paper is to investigate the asymptotic behaviour of
$\|e^{-tH}\|_{\infty\to\infty}$, i.e., of the norm of the semigroup operators in
$\Ls(L_\infty(\Omega))$.
In \cite[formula~(1.9)]{sim80}, the estimate
\[
  \|e^{-tH}\|_{\infty\to\infty} \le C(1+t)^{d/2} e^{-E_0t}
\]
was shown for a certain class of Schr\"odinger operators $H$ on $\R^d$.
This estimate was substantially improved and generalized in \cite[Thm\.4]{ouh06a}
for operators on Euclidean domains and in \cite[Thm\.7]{ouh06b} for operators on
open subsets of complete Riemannian manifolds:
If the semigroup generated by $-H$ satisfies Gaussian upper bounds, then
\[
  \|e^{-tH}\|_{\infty\to\infty} \le C(1+t\ln t)^{d/4} e^{-E_0t}.
\]

Here we show that the term $\ln t$ can be removed, albeit only for the case of operators
on subsets of $\R^d$. We point out that all the constants in our estimate are explicit.

\begin{theorem}\label{main-sg}
Let Assumption~\A\ hold. Then
\[
  \|e^{-tH}\|_{\infty\to\infty} \le 2^{1/4}M \bigl(1+\tfrac{5.56}{d}(E_0+\omega)t\bigr)^{d/4} e^{-E_0t}
  \avoidbreak
\]
for all $t\ge0$.
\end{theorem}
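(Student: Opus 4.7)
The plan is to reduce to the case $E_0=0$, derive an $L_2\to L_\infty$ bound from the Gaussian estimate~\eqref{gub}, and then pass to $L_\infty\to L_\infty$ by a spatial cut-off of the test function combined with a bootstrapping argument for $\phi(t):=\|e^{-tH}\|_{\infty\to\infty}$.

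First I would replace $H$ by $\widetilde H:=H-E_0$. Then $E_0(\widetilde H)=0$ and \eqref{gub} holds for $e^{-t\widetilde H}$ with $M$, $a$ unchanged and $\omega$ replaced by $\omega+E_0$; since $\|e^{-tH}\|_{\infty\to\infty}=e^{E_0 t}\|e^{-t\widetilde H}\|_{\infty\to\infty}$, the theorem reduces to the case $E_0=0$, in which $\|e^{-tH}\|_{2\to 2}\le 1$. Direct Gaussian integration in~\eqref{gub} gives $\|p_t(x,\cdot)\|_2\le Me^{\omega t}(2a\pi t)^{-d/4}$, and writing $e^{-tH}=e^{-uH}e^{-(t-u)H}$ combined with the $L_2$-contractivity then yields the refined estimate
\[
  \|e^{-tH}\|_{2\to\infty}\le Me^{\omega u}(2a\pi u)^{-d/4}\qquad (0<u\le t),
\]
which stabilises at $u=\min\{t,\,d/(4\omega)\}$ to $M\bigl(2e\omega/(a\pi d)\bigr)^{d/4}$ once $\omega t\ge d/4$.

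For fixed $f\in L_\infty(\Omega)$ with $\|f\|_\infty\le 1$ and $x_0\in\Omega$, I would split $f=f\chi_{B(x_0,R)}+f(1-\chi_{B(x_0,R)})$ at a radius $R>0$ to be chosen. The near part satisfies $\|f\chi_{B(x_0,R)}\|_2\le(c_dR^d)^{1/2}$ with $c_d:=|B(0,1)|$, and hence is controlled by the preceding $L_2\to L_\infty$ estimate; the far part is controlled pointwise by the Gaussian tail $Me^{\omega t}\Pr[\chi_d^2\ge 2R^2/(at)]$. Joint optimisation in $u$ and $R$, combined with careful constant-tracking (via Stirling-type bounds on the incomplete $\Gamma$-function and on $c_d$), should produce the prefactor $2^{1/4}M$ and the explicit numerical constant $\tfrac{5.56}{d}$, close to $\tfrac{4\ln 4}{d}$.

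The main obstacle is that a naive two-region split produces the wrong exponent $(\omega t)^{d/2}$ instead of $(\omega t)^{d/4}$: killing the far-field $e^{\omega t}$ forces $R\gtrsim t\sqrt{a\omega}$, and then the factor $R^{d/2}$ alone contributes $(\omega t)^{d/4}$, which multiplied by the stabilised $(\omega/d)^{d/4}$ from $\|e^{-tH}\|_{2\to\infty}$ gives $(\omega t)^{d/2}$. To recover~$d/4$ I would apply the splitting to $u(t-h):=e^{-(t-h)H}f$ rather than to $f$ itself, bounding the far part by $\phi(t-h)$ in place of $\|f\|_\infty$ and letting $R$ depend on $\phi(t-h)$; the resulting step inequality should convert, in the limit $h\to 0$, into a differential inequality of the form $\phi'(t)\le c\omega\phi(t)^{1-4/d}$, whose solution $\phi(t)^{4/d}\le 1+(4c\omega/d)t$ has precisely the required shape $(1+\tfrac{5.56}{d}\omega t)^{d/4}$ once the optimal constant $c$ is identified.
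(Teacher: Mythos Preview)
Your approach is quite different from the paper's, and the bootstrapping step contains a genuine gap. The paper does not use a near/far splitting of test functions. Instead it first proves an auxiliary estimate (Theorem~\ref{linfty-sg}): for every $\eps\in(0,1]$,
\[
  \|e^{-tH}\|_{\infty\to\infty}
  \le 2^{1/4}M\Bigl(\tfrac{1+1/\sqrt\eps}{2}\Bigr)^{d/2}e^{\eps(E_0+\omega)t-E_0t},
\]
and then obtains Theorem~\ref{main-sg} by an elementary optimisation in~$\eps$. The auxiliary estimate is proved via Davies' method of exponential weights $e^{\pm\alpha\rho_w}$ with $\rho_w(x)=|x-w|$. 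The decisive point (after reducing to $E_0=0$) is that the weighted $L_2{\to}L_2$ bound coming from~\eqref{gub} alone is $Me^{\omega t}e^{\alpha^2t}$, but complex interpolation against the \emph{analytic} contraction bound $\|e^{-zH}\|_{2\to2}\le1$ for $z\in\C_+$ (Proposition~\ref{weighted-est-vog15}, based on \cite{vog15}) strips off the factor $Me^{\omega t}$ entirely, giving
\[
  \|e^{-\alpha\rho_w}e^{-tH}e^{\alpha\rho_w}\|_{2\to2}\le e^{\alpha^2t}.
\]
This is what halves the exponent: writing $e^{-tH}=e^{-\eps tH}e^{-(1-\eps)tH}$, only the short piece of length $\eps t$ is estimated via the Gaussian bound, while the long piece uses the clean weighted $L_2$ contraction above and contributes no $e^{\omega t}$.

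In your scheme, by contrast, the recursion is purely multiplicative in $\phi$: both the near term (through $\|v\chi_{B_R}\|_2\le(c_dR^d)^{1/2}\|v\|_\infty$) and the far term are bounded by a factor times $\phi(t-h)$, so one obtains $\phi(t)\le\bigl[A(R,h)+B(R,h)\bigr]\phi(t-h)$ with a bracket that depends on $(R,h)$ but \emph{not} on $\phi$. Letting $h\to0$ this yields at best $(\ln\phi)'\le\text{const}$, i.e.\ exponential growth, not the claimed $\phi'\le c\omega\,\phi^{1-4/d}$. Declaring that $R$ depends on $\phi(t-h)$ does not help: the optimal bracket $\inf_R\bigl[A(R,h)+B(R,h)\bigr]$ is still a fixed number independent of $\phi$. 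What is missing is a mechanism that exploits $E_0=0$ beyond merely stabilising $\|e^{-tH}\|_{2\to\infty}$; in the paper that mechanism is the analyticity/Phragm\'en--Lindel\"of step, and your outline has no counterpart to it. Without such an ingredient the argument as written cannot reach the exponent~$d/4$.
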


\begin{remark}
As pointed out in \cite{ouh06a}, the exponent $d/4$ is sharp in dimension $d=4$:
in \cite[Thm\.3.1]{sim81}, examples are given where $E_0=0$ and
$\|e^{-tH}\|_{\infty\to\infty}$ grows like $\frac{(1+t)^{\smash{4/4}}}{\ln t}$.
It is not clear if $d/4$ is sharp in other dimensions;
however, a reduction of the exponent below $(d-2)/4$ is impossible,
which can be seen by considering slowly varying resonances
with index $\alpha\in\bigl(0,(d-2)/2\bigr)$ (see \cite[Prop\.10, Thm\.14]{dasi91}).
\end{remark}

For our second main result we specialize to the case that Assumption~\A\ is satisfied
with $M=1$,\, $\omega=0$ and $a=4$, i.e., $e^{-tH}$ is dominated by the free heat
semigroup on $\R^d$,
\begin{equation}\label{free}
  |e^{-tH}\-f| \le e^{t\Delta}|f| \qquad \bigl(t\ge0,\ f \in L_2(\Omega)\bigr).
  \avoidbreak
\end{equation}
An important example for the operator $-H$ is the Dirichlet Laplacian with magnetic field and
a locally integrable absorption potential on an open set $\Omega\subseteq\R^d$
(cf.\ \cite[Section~4.5]{ouh05}).
For more general absorption potentials the space of strong continuity of
the semigroup will be $L_2(\Omega')$ for some measurable $\Omega'\subseteq\Omega$.

Assuming $E_0(H)>0$, we are going to study the quantity
\begin{equation}\label{q-def}
  q(H) := \frac{\|H^{-1}\|_{\infty\to\infty}}{\|H^{-1}\|_{2\to2}}
        = E_0(H) \cdot \|H^{-1}\|_{\infty\to\infty}\,.
\end{equation}
Note that $q(H) \ge 1$ by duality and Riesz-Thorin interpolation,
and $q(H) = 1$ if $H = -\Delta_{\R^d}+c$ for some $c>0$.
Also in the case where $H$ is the negative Dirichlet Laplacian on a bounded domain,
$q(H)$ may be arbitrarily close to~$1$, as was recently shown in \cite{ber17}.

\begin{remark}\label{torsion-rem}
(a) If $H^{-1}$ is a positivity preserving operator, then
\[
  \|H^{-1}\|_{\infty\to\infty} = \|H^{-1}\ind\|_\infty\,
\]\smallbds
so that
\begin{equation}\label{q-formula}
  q(H) = E_0(H) \cdot \|H^{-1}\ind\|_\infty
  \avoidbreak
\end{equation}\smallbds
in this case.

(b) Let $D$ be an open subset of $\R^d$ and $H = -\Delta_D$, where $\Delta_D$
denotes the Dirichlet Laplacian on $D$.
Then $u_D := H^{-1}\ind$ is the \emph{torsion function} of $D$,
and by~\eqref{q-formula} we have
\[
  \|u_D\|_\infty = \frac{q(H)}{E_0(H)}\,.
\]
Thus, the bounds for the quantity $q(H)$ that we prove in Theorem~\ref{torsion} below
lead to bounds for the $L_\infty$-norm of the torsion function~$u_D$.
We mention that
\[
  u_D(x) = \mathbb{E}_x \bigl( \inf\sset{t\ge0}{B(t)\notin D} \bigr),
\]
the expected time of first exit from $D$ of Brownian motion $B$ starting at $x\in D$.

\end{remark}

We first consider the case that $H=-\Delta_{B_d}$, where
$B_d$ denotes the open unit ball in~$\R^d$.

\begin{lemma}\label{ball}
There exists $C>0$ such that
\[
  \smash[b]{ \frac d8 \le q(-\Delta_{B_d}) \le \frac d8 + Cd^{1/3} }
\]
for all dimensions $d$.
\end{lemma}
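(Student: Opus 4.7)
The plan is to reduce the inequality to classical two-sided bounds on the first positive zero of the Bessel function. By the radial Ansatz $u(x) = f(|x|)$, the torsion function of the unit ball is given by $u_{B_d}(x) = (1-|x|^2)/(2d)$ (the ODE $f'' + \tfrac{d-1}{r}f' = -1$ with $f(1)=0$ is solved directly), so $\|u_{B_d}\|_\infty = 1/(2d)$, attained at the origin. Since $(-\Delta_{B_d})^{-1}$ is positivity preserving, \eqref{q-formula} from Remark~\ref{torsion-rem}, together with the separation-of-variables formula $E_0(-\Delta_{B_d}) = j_{(d-2)/2,1}^2$ (where $j_{\nu,1}$ is the first positive zero of $J_\nu$), yields
\[
q(-\Delta_{B_d}) = \frac{j_{(d-2)/2,1}^2}{2d}.
\]
The lemma is therefore equivalent to the Bessel-zero estimate
\[
\tfrac{d^2}{4} \le j_{(d-2)/2,1}^2 \le \tfrac{d^2}{4} + 2Cd^{4/3}.
\]

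For the lower bound I would use a classical Gatteschi-type inequality such as $j_{\nu,1}^2 \ge (\nu+1)(\nu+3)$ for $\nu \ge 0$; substituting $\nu = (d-2)/2$ yields $j_{(d-2)/2,1}^2 \ge d^2/4 + d$ for $d \ge 2$, while the case $d=1$ is immediate from $j_{-1/2,1} = \pi/2$.

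The substantive part is the upper bound, for which I would invoke the classical uniform asymptotic (see, e.g., Watson's treatise or DLMF~10.21.40)
\[
j_{\nu,1} = \nu + c\+\nu^{1/3} + O(\nu^{-1/3}) \qquad (\nu\to\infty),
\]
with $c = -2^{-1/3}a_1 > 0$, where $a_1 \approx -2.338$ is the first negative zero of the Airy function. Squaring and substituting $\nu = (d-2)/2$ gives $j_{(d-2)/2,1}^2 = d^2/4 + O(d^{4/3})$ with an explicit positive coefficient on the $d^{4/3}$ term, which yields the upper bound for all sufficiently large $d$. The finitely many remaining small dimensions can then be handled by direct numerical verification, possibly enlarging $C$.

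The hard part is this upper bound: it requires control of $j_{\nu,1}$ at the level of the Airy-type $\nu^{1/3}$ correction near the Bessel turning point. Coarser methods (e.g.\ a plain Hardy inequality on the ball or simple polynomial Rayleigh--Ritz trial functions) yield only $\lambda_1(B_d) \ge d^2/4 - d + O(1)$ or similar, which produces an error of the wrong order in $d$; so some version of the sharp Bessel asymptotics appears to be unavoidable.
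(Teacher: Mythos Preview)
Your proposal is correct and follows essentially the same route as the paper: compute the torsion function of $B_d$ explicitly as $(1-|x|^2)/(2d)$, express $E_0(-\Delta_{B_d})$ as $j_{d/2-1,1}^2$, and then invoke two-sided bounds/asymptotics for the first Bessel zero to get $E_0 = \tfrac{d^2}{4} + O(d^{4/3})$. The only cosmetic differences are the sources cited: the paper uses a direct inequality $E_0 \ge d^2/4$ from \cite{fmpp07} for the lower bound (rather than a Gatteschi-type Bessel bound) and Tricomi \cite{tri49} for the $O(d^{4/3})$ upper asymptotic (rather than the Airy expansion you quote from DLMF/Watson), but the mathematical content is identical.
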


Is is easy to see that $q(-\Delta_B) = q(-\Delta_{B_d})$ for all balls $B\subset\R^d$.
One might conjecture that $q(-\Delta_D) \le q(-\Delta_{B_d})$
for all bounded open subsets $D\subset\R^d$ (\cite[top of p\.614]{ber12}).
This does \emph{not} hold in dimension $d=2$: if $D$ is an equilateral triangle,
then $q(-\Delta_D) \approx 1.462 > 1.446 \approx q(-\Delta_{B_2})$;
see \cite[top of p\.116]{her79}.
It was recently shown by Henrot et al.\ (\cite[Cor\.3.7]{hlp17}) that
also the equilateral triangle is not a maximizer of $q(-\Delta_D)$ in dimension~$2$.

Our second main result shows that, nevertheless, $q(-\Delta_{B_d})$
is not far from an upper bound for $q(H)$,
even for the general operator $H$ in place of $-\Delta_D$;
note that our estimate below with $c\sqrt d$ is only slightly worse than
the estimate with $Cd^{1/3}$ in Lemma~\ref{ball}.

\begin{theorem}\label{torsion}
Let Assumption~\A\ hold with the stronger estimate~\eqref{free}. Then
\[
  1 \le q(H) \le \frac d8 + c\sqrt{d} + 1,
\]
where $c := \frac14 \smash{\sqrt{5(1+\frac14\ln2)}} < 0.61$.
\end{theorem}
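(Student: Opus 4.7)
The plan is to represent $H^{-1}$ as the time integral of the semigroup and balance two complementary bounds on $\|e^{-tH}\|_{\infty\to\infty}$: uniform contractivity from \eqref{free} for short times, and exponential decay driven by the spectral gap $E_0>0$ for long times.

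The lower bound $q(H)\ge 1$ is immediate: $L_2$-selfadjointness gives $\|H^{-1}\|_{1\to 1}=\|H^{-1}\|_{\infty\to\infty}$ (both equal the essential supremum in $x$ (or $y$) of $\|G(x,\cdot)\|_1$ for the kernel $G$ of $H^{-1}$), so Riesz--Thorin interpolation forces $1/E_0=\|H^{-1}\|_{2\to 2}\le\|H^{-1}\|_{\infty\to\infty}^{1/2}\|H^{-1}\|_{1\to 1}^{1/2}=\|H^{-1}\|_{\infty\to\infty}$.

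For the upper bound I would use
\[
  \|H^{-1}\|_{\infty\to\infty}\le\int_0^\infty\|e^{-tH}\|_{\infty\to\infty}\,dt,
\]
valid since $E_0>0$, and split at a threshold $T>0$ to be chosen. On $[0,T]$, the free-heat domination \eqref{free} yields $\|e^{-tH}\|_{\infty\to\infty}\le 1$, contributing at most $T$. On $[T,\infty)$, I would estimate $\|e^{-tH}\|_{\infty\to\infty}=\operatorname{ess\,sup}_x\int|p_t(x,y)|\,dy$ by splitting the $y$ integral into the ball $B_R(x)$ and its complement. On $B_R(x)$, Cauchy--Schwarz together with the Davies-style refinement $\int|p_t(x,y)|^2\,dy=p_{2t}(x,x)\le(2\pi d/E_0)^{-d/2}e^{d/2-2E_0t}$ (valid for $t\ge d/(4E_0)$, obtained by optimising the composition $\|e^{-tH}\delta_x\|_2\le e^{-E_0(t-s)}\|e^{-sH}\delta_x\|_2$ against the Gaussian bound $p_{2s}(x,x)\le(8\pi s)^{-d/2}$) controls the near-field. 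On the complement, the Gaussian tail inherited from \eqref{free} is controlled by Chernoff with the choice $R\sim 2\sqrt{dt}$, which makes it exponentially small in $d$. Jointly optimising $R$ and $T$ together with a Stirling/Laplace expansion should produce a bound of shape $E_0T+1\le d/8+c\sqrt d+1$, with the specific $c=\tfrac14\sqrt{5(1+\tfrac14\ln 2)}$ emerging from the second-order terms in that expansion.

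The main obstacle will be to extract the sharp leading constant $1/8$: a crude symmetric Davies--Gaffney factorisation $e^{-tH}=e^{-sH}\cdot e^{-(t-2s)H}\cdot e^{-sH}$ with optimal $s=d/(4E_0)$ tends to produce a constant of order $d/4$ (or even $d/2$), which is twice or four times too large. Achieving $d/8$ therefore forces either an asymmetric partition of the time variables, or the finer spatial ball argument sketched above, where the Gaussian tails must be used at their sharpest Chernoff form rather than through a crude $L^2$--$L^\infty$ norm estimate; the value $5(1+\tfrac14\ln2)\approx 5.87$ echoing the $5.56$ of Theorem~\ref{main-sg} strongly suggests that the proof adapts the internal optimisation behind Theorem~\ref{main-sg} to the present $M=1,\omega=0,a=4$ setting rather than using that theorem as a black box.
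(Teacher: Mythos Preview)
Your overall architecture is right and matches the paper: write $H^{-1}=\int_0^\infty e^{-tH}\,dt$, use $\|e^{-tH}\|_{\infty\to\infty}\le1$ on $[0,t_0]$ from~\eqref{free}, and an exponentially decaying bound on $[t_0,\infty)$. Your closing instinct is also correct: the paper does not invoke Theorem~\ref{main-sg} as a black box but the underlying $\eps$-dependent estimate of Theorem~\ref{linfty-sg},
\[
  \|e^{-tH}\|_{\infty\to\infty}
  \le 2^{1/4}\Bigl(\tfrac{1+1/\sqrt\eps}{2}\Bigr)^{\!d/2}e^{-(1-\eps)E_0t}
\]
(here $M=1$, $\omega=0$). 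Choosing $t_0$ so that the right side equals~$1$ gives $q(H)\le E_0t_0+\frac1{1-\eps}$, and the remainder is the explicit choice $\eps=(1+2\gamma/\sqrt d)^{-2}$ with $\gamma=\tfrac85 c$, followed by a one-variable calculus verification that $(1-\eps)E_0t_0+1\le(1-\eps)\bigl(\tfrac d8+c\sqrt d+1\bigr)$.

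Where your proposal diverges is the spatial ball decomposition for the long-time regime, and there the sketch has a real gap. With $R\sim2\sqrt{dt}$ the Gaussian far-field contribution $\int_{|y-x|>R}k_t(x-y)\,dy$ is bounded by a quantity of order $(2/e)^{d/2}$ \emph{uniformly in $t$}; it carries no decay as $t\to\infty$, so $\int_T^\infty(\text{far field})\,dt$ diverges. To rescue the argument you would have to couple the tail to the spectral gap---for instance by taking $R(t)$ of order $t$ rather than $\sqrt t$, or by routing through $L_2$ on both sides of a two-time factorisation---and then redo the optimisation; it is not at all clear the resulting constants collapse to $d/8$. The weighted-norm estimate of Theorem~\ref{linfty-sg} already packages the optimal interaction between Gaussian off-diagonal decay and the spectral gap, which is precisely why the paper's route avoids this difficulty and reduces the whole problem to the choice of $\eps$.
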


\begin{remark}
(a) Clearly, the upper bound $C_d := \frac d8 + c\sqrt{d} + 1$ in Theorem~\ref{torsion}
is not optimal. But, as discussed above,
$\frac d8$ is the correct leading order term in high dimensions.
It turns out that the bound $C_d$ is also quite good in low dimensions.
Here is a table of approximate values for $q_d := q(-\Delta_{B_d})$ and $C_d\+$:
\begin{center}
\begin{tabular}{|c|c|c|c|c|c|}
\hline
\rule{0pt}{2.3ex}$d$ & 1 & 2 & 3 & 4 & 5 \\ \hline
\rule{0pt}{2.3ex}$q_d$ & 1.2337 & 1.4458 & 1.6449 & 1.8352 & 2.0191 \\ \hline
\rule{0pt}{2.3ex}$C_d$ & 1.7305 & 2.1063 & 2.4238 & 2.7110 & 2.9790 \\ \hline
\end{tabular}
\end{center}
It appears that $C_d$ is never off by more than a factor of $1.5$.

\pagebreak[1]

(b) In \cite[Thm\.1]{beca09} it is shown that
\[
  q(-\Delta_D) \le 3\ln2 \cdot d + 4
\]
for all open subsets $D\subset\R^d$ with $E_0(-\Delta_D)>0$.
Theorem~\ref{torsion} improves the constant $3\ln2$ in front of $d$ to the optimal
$\frac18$ (cf.\ Lemma~\ref{ball}).
\end{remark}

The proofs of Lemma~\ref{ball} and of Theorems~\ref{main-sg} and~\ref{torsion}
will be given in Section~\ref{sec_proofs}. In Section~\ref{sec_weighted}
we provide the necessary tools and prove an auxiliary result.

\section{The method of weighted estimates}\label{sec_weighted}

The aim of this section is to prove the following theorem,
which will be used in the proofs of our two main results.

\begin{theorem}\label{linfty-sg}
Let Assumption~\A\ hold. Then for all $\eps\in(0,1]$ one has
\[
  \|e^{-tH}\|_{\infty\to\infty}
  \le 2^{1/4}M \left( \frac{1+1/\sqrt\eps\,}{2} \right)^{\!d/2} e^{\eps(E_0+\omega)t-E_0t}
  \qquad (t\ge0).
\]
\end{theorem}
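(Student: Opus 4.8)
The plan is to exploit the Gaussian domination~\eqref{gub2} together with the method of weighted $L_2$-estimates (à la Davies/Gaffney), and then to pass from weighted $L_2\to L_2$ bounds to an $L_\infty\to L_\infty$ bound by the standard ``Davies trick''. First I would reduce to the free heat semigroup: by~\eqref{gub2} we have, pointwise, $|e^{-tH}f| \le Me^{\omega t}e^{\frac a4 t\Delta}|f|$, so $\|e^{-tH}\|_{\infty\to\infty} \le Me^{\omega t}\|e^{\frac a4 t\Delta}\|_{\infty\to\infty}$. But that naive estimate loses the crucial factor $e^{-E_0 t}$, so instead I would keep $e^{-tH}$ itself on the $L_2$ side and only dominate when estimating off-diagonal mass. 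Concretely, fix a bounded Lipschitz weight $\rho$ on $\R^d$ with $|\nabla\rho|\le 1$ a.e., and for a parameter $\alpha>0$ consider the conjugated operator $e^{\alpha\rho}e^{-tH}e^{-\alpha\rho}$. The point is that since $\|e^{-tH}\|_{2\to2} = e^{-E_0 t}$, and since the Gaussian bound controls how much the exponential weight can amplify the kernel, one gets a weighted bound of the shape
\[
  \bigl\| e^{\alpha\rho} e^{-tH} e^{-\alpha\rho} \bigr\|_{2\to2}
  \le Me^{(\omega + c\alpha^2)t - E_0 t}
\]
for a suitable constant $c$ depending on $a$ — this is where the Gaussian exponent $\exp(-|x-y|^2/at)$ is converted, via $e^{\alpha(\rho(x)-\rho(y))}e^{-|x-y|^2/at} \le e^{a\alpha^2 t/4}$, into the Gaussian completing-the-square gain $e^{c\alpha^2 t}$. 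The subtle part is retaining the sharp $e^{-E_0t}$ factor while doing this: one should write the weighted bound as $\|e^{-tH}\|_{2\to2}$ times a correction, which requires comparing $e^{-tH}$ with $e^{\frac a4 t\Delta}$ only in the ``off-diagonal correction'' and not in the overall norm.

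Next I would run the Davies iteration/ultracontractivity-free argument to upgrade this to $L_\infty$. Writing the kernel bound coming from the weighted $L_2$ estimate and a companion weighted estimate from $L_1$ (obtained by the adjoint, again using the symmetric Gaussian bound), one obtains a pointwise Gaussian-type kernel bound
\[
  |p_t(x,y)| \le M' \, t^{-d/2} \exp\!\Bigl(-\tfrac{|x-y|^2}{bt}\Bigr) e^{(\omega+c\alpha^2)t - E_0 t}
\]
after optimizing over $\alpha$ with $\alpha\sim |x-y|/t$. Then $\|e^{-tH}\|_{\infty\to\infty} = \esssup_x \int |p_t(x,y)|\,dy$, and integrating the Gaussian in $y$ over $\R^d$ gives a factor $\sim (bt)^{d/2} t^{-d/2} = b^{d/2}$, so
\[
  \|e^{-tH}\|_{\infty\to\infty} \le M'' \, b^{d/2} \, e^{(\omega+c\alpha_0^2)t - E_0 t}
\]
for whatever residual choice of $\alpha_0$ survives. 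The parameter bookkeeping must be arranged so that the ``off-diagonal gain'' parameter and the ``spectral gain'' parameter are tied together by a single $\eps\in(0,1]$: setting things up so that the extra exponential growth is exactly $\eps(E_0+\omega)t$ forces a specific relation, and then the dimensional prefactor becomes $\bigl(\frac{1+1/\sqrt\eps}{2}\bigr)^{d/2}$, with the constant $2^{1/4}$ and the factor $M$ emerging from the $L_1$–$L_\infty$ interpolation (the $2^{1/4}$ being the usual $2^{1/p'}$-type constant from Riesz–Thorin applied at $p=\infty$ after an $L_2$-step, i.e. $2^{d/2\cdot 0}\cdots$ — more precisely from splitting $e^{-tH} = e^{-tH/2}e^{-tH/2}$ and estimating each half).

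The cleanest route, and the one I would actually write, avoids explicit kernel manipulations: split $e^{-tH} = e^{-(t/2)H}e^{-(t/2)H}$ and estimate $\|e^{-tH}\|_{\infty\to\infty} \le \|e^{-(t/2)H}\|_{2\to\infty}\,\|e^{-(t/2)H}\|_{\infty\to2}$. By selfadjointness these two norms are equal, so $\|e^{-tH}\|_{\infty\to\infty} \le \|e^{-(t/2)H}\|_{\infty\to2}^2$. Now estimate $\|e^{-sH}\|_{\infty\to2}$: for $f\in L_\infty$, using~\eqref{gub2}, $|e^{-sH}f| \le Me^{\omega s}e^{\frac a4 s\Delta}\ind \cdot \|f\|_\infty$ pointwise — but again this drops $e^{-E_0 s}$. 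So instead interpolate: $\|e^{-sH}\|_{\infty\to2} \le \|e^{-sH}\|_{2\to2}^{\theta}\|e^{-sH}\|_{1\to\infty}^{?}\cdots$ won't directly work either. The honest approach is the weighted one above; the ``hard part'' is precisely the simultaneous optimization that couples the Gaussian off-diagonal decay parameter with the spectral bottom $E_0$, producing the single free parameter $\eps$ and the clean constants $2^{1/4}$, $\frac{1+1/\sqrt\eps}{2}$, and $\eps(E_0+\omega)$. I expect the main obstacle to be carrying the factor $e^{-E_0 t}$ sharply through the Davies-type argument — naive domination by the free semigroup destroys it — so the key technical device is to conjugate $e^{-tH}$ by $e^{\alpha\rho}$ on the operator level (where the spectral bound $\|e^{-tH}\|_{2\to2}=e^{-E_0t}$ is available) and to use the Gaussian bound~\eqref{gub} only to control the weight amplification $e^{\alpha(\rho(x)-\rho(y))}$ against the Gaussian tail.
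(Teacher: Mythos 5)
Your high-level strategy is the right one — weighted $L_2$-estimates with exponential weights $e^{\pm\alpha\rho}$, followed by a passage from a weighted $L_2\to L_\infty$ bound to an $L_\infty\to L_\infty$ bound — and you correctly identify the central obstacle, namely that naive domination by the free heat semigroup destroys the $e^{-E_0 t}$ factor. But you then assert the key estimate $\|e^{\alpha\rho}e^{-tH}e^{-\alpha\rho}\|_{2\to2}\le Me^{(\omega+c\alpha^2)t-E_0t}$ without providing a mechanism for it, and in fact that estimate is not directly obtainable from the two inputs at hand: the Gaussian bound plus Schur gives $\le Me^{(\omega+c\alpha^2)t}$ (losing $E_0$), while the spectral bound gives $\le e^{-E_0 t}$ (losing the weight). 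Combining these \emph{sharply} — with neither $M$ nor $\omega$ surviving — is precisely the crux, and the paper achieves it via a complex-interpolation argument (Proposition~\ref{weighted-est-vog15}, a refinement of \cite[Prop.\,3.1]{vog15}) that upgrades the crude weighted bound $Ce^{(1+\eps)\alpha^2t+\omega t}$ to the clean $e^{\alpha^2t-E_0t}$. You flag this as ``the subtle part'' and ``the hard part'' but leave it open; without that step the whole argument collapses to the lossy estimate you already rejected.

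Two further points of divergence. First, the parameter $\eps$ does not arise from abstract ``bookkeeping'': in the paper it enters through the explicit semigroup splitting $e^{-tH}=e^{-\eps tH}e^{-(1-\eps)tH}$, estimating the first factor in weighted $L_2\to L_\infty$ (via a weighted bound for $e^{t\Delta}$, Lemma~\ref{free-heat}) and the second in weighted $L_2\to L_2$ (via the sharp bound just mentioned). Your proposed split into $t/2+t/2$ would not produce the free parameter $\eps$ and would not give the stated prefactor. Second, your account of the constant $2^{1/4}$ as ``the usual $2^{1/p'}$-type constant from Riesz--Thorin'' is not correct: it comes from the integral bound $\int_{\R^d}e^{-2\alpha|y|}\,dy\le\sqrt2\,(\pi d/2e)^{d/2}\alpha^{-d}$ (Lemma~\ref{int_est}), whose square root contributes $2^{1/4}$ when passing from the weighted $L_2\to L_\infty$ bound to $L_\infty\to L_\infty$ (Proposition~\ref{weighted-est}).
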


\begin{remark}
The precise form of the factor $\bigl( \frac{1+1/\sqrt\eps\,}{2} \bigr)^{\mkern-1mu d/2}$
will be important for the proof of Theorem~\ref{torsion}, where $\omega=0$:
given $t>0$, it is not difficult to show that there exists $\eps\in(0,1]$ with
\[
  \left( \frac{1+1/\sqrt\eps\,}{2} \right)^{\!\smash{d/2}} e^{\eps E_0t-E_0t} < 1
\]
if and only if $E_0t > \frac d8$.
This is the origin of the leading term $\frac d8$ in the upper estimate of
Theorem~\ref{torsion}.
\end{remark}

The proof of Theorem~\ref{linfty-sg} is based on the method of weighted estimates.
We need two ingredients: a result on complex interpolation and a way to derive
$L_\infty{\to}L_\infty$-estimates from weighted $L_2{\to}L_\infty$-estimates.
Our first ingredient is a refinement of Proposition~3.1 from \cite{vog15},
where the case $\eps=\omega=0$ is proved. We denote $\C_+ := \sset{z\in\C}{\Re z>0}$.

\begin{proposition}\label{weighted-est-vog15}
Let $(\Omega,\mu)$ be a measure space, and let $\rho \from \Omega\to\R$ be measurable. Let $E_0\in\R$,
and let $T \from \C_+\to\Ls(L_2(\mu))$ be analytic, $\|T(z)\|_{2\to2} \le e^{-E_0\Re z}$ for all
$z\in\C_+$. Assume that for every $\eps>0$ there exist $C\ge0$ and $\omega\in\R$ such that
\[
  \|e^{-\alpha\rho}\+T(t)\+e^{\alpha\rho}\|_{2\to2} \le Ce^{(1+\eps)\alpha^2t+\omega t} \qquad (\alpha,t>0).
\]
Then $\|e^{-\alpha\rho}\+T(t)\+e^{\alpha\rho}\|_{2\to2} \le e^{\alpha^2t-E_0 t}$ for all $\alpha,t>0$.
\end{proposition}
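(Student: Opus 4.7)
The plan is to reduce to the special case $\eps=\omega=0$, which is precisely Proposition~3.1 of~\cite{vog15}. Two independent substitutions will absorb the two extra parameters: rescaling the weight function $\rho$ will absorb the factor $1+\eps$, and the scalar-valued analytic shift $T\mapsto e^{-\omega z}T$ will absorb the growth rate $\omega$.

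Fix $\eps>0$ and let $C\ge0$, $\omega\in\R$ be the constants provided by the hypothesis. I introduce the rescaled weight $\rho_\eps:=\rho/\sqrt{1+\eps}$ and the analytically shifted family $\tT(z):=e^{-\omega z}T(z)$ on $\C_+$. Then $\tT$ is still analytic and satisfies $\|\tT(z)\|_{2\to2}\le e^{-(E_0+\omega)\Re z}$. Rewriting the weighted hypothesis with $\alpha$ replaced by $\alpha/\sqrt{1+\eps}$, using the identity $(1+\eps)\bigl(\alpha/\sqrt{1+\eps}\bigr)^2=\alpha^2$, and then multiplying through by $e^{-\omega t}$, one obtains
\[
  \|e^{-\alpha\rho_\eps}\tT(t)e^{\alpha\rho_\eps}\|_{2\to2}\le Ce^{\alpha^2 t}\qquad(\alpha,t>0).
\]
This is exactly the hypothesis of \cite[Prop.~3.1]{vog15} applied to $\tT$ with weight $\rho_\eps$ and ground state energy $E_0+\omega$, so the conclusion of that proposition yields
\[
  \|e^{-\alpha\rho_\eps}\tT(t)e^{\alpha\rho_\eps}\|_{2\to2}\le e^{\alpha^2 t-(E_0+\omega)t}.
\]

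To finish, I would multiply by $e^{\omega t}$ and substitute $\beta:=\alpha/\sqrt{1+\eps}$, so that $\alpha\rho_\eps=\beta\rho$ and $\alpha^2=(1+\eps)\beta^2$. This gives
\[
  \|e^{-\beta\rho}T(t)e^{\beta\rho}\|_{2\to2}\le e^{(1+\eps)\beta^2 t-E_0 t}
\]
for all $\beta,t>0$, and this bound holds for every $\eps>0$ separately. Letting $\eps\downarrow0$ yields the desired conclusion.

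The only step that requires real thought is identifying the correct reduction — specifically, that scaling $\rho$ by $1/\sqrt{1+\eps}$ converts the spurious factor $1+\eps$ in the exponent into $1$ without otherwise distorting the structure of the hypothesis, and that the residual $e^{\omega t}$ can be absorbed by a trivial complex shift of the analytic family. Everything else is routine bookkeeping, and the heavy lifting (the weighted Phragm\'en--Lindel\"of / complex-interpolation argument) is hidden inside the cited proposition from \cite{vog15}.
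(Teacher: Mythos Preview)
Your proof is correct and follows essentially the same reduction strategy as the paper: absorb $\omega$ by a scalar shift, absorb the factor $(1+\eps)$ by a rescaling, apply \cite[Prop.~3.1]{vog15}, then let $\eps\to0$. The only cosmetic difference is that you rescale the weight $\rho\mapsto\rho/\sqrt{1+\eps}$ whereas the paper rescales the time variable $t\mapsto t/(1+\eps)$; both substitutions arrive at the identical penultimate estimate $\|e^{-\alpha\rho}T(t)e^{\alpha\rho}\|_{2\to2}\le e^{(1+\eps)\alpha^2 t-E_0 t}$ before the limit is taken.
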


Here and in the following we denote
\[
  \|e^{-\rho}\lB e^\rho\|_{p\to q} := \sup\set{\|e^{-\rho}\lB e^\rho\- f\|_q}
  {f\in L_p(\mu),\ \|f\|_p\le1,\ e^\rho\- f\in L_2(\mu)}
\]
for a given operator $B \in \Ls(L_2(\mu))$, a measurable function $\rho \from \Omega\to\C$
and $p,q\in[1,\infty]$.

\begin{proof}[\Proof of Proposition~\ref{weighted-est-vog15}]
\hskip 0em minus 0.05em
We define a rescaled analytic function $\tT \from \mkern-1mu\C_+\mkern-1mu\to\Ls(L_2(\mu))$ by
\[
  \tT(z) := \exp\bigl( -\tfrac{\omega z}{1+\eps} \bigr) T\bigl( \tfrac{z}{1+\eps} \bigr).
\]\smallbds
Then
\[
  \|\tT(z)\|_{2\to2}
  \le \exp\bigl( -\tfrac{\omega\Re z}{1+\eps} - E_0 \tfrac{\Re z}{1+\eps} \bigr)
    =  \exp\bigl( -\tfrac{\omega+E_0}{1+\eps} \Re z \bigr)
\]
for all $z\in\C_+$. Moreover,
\[
  \|e^{-\alpha\rho} \+\tT(t)\+ e^{\alpha\rho}\|_{2\to2}
  \le \exp\bigl( -\tfrac{\omega t}{1+\eps} \bigr) \cdot
      C\exp\bigl( {(1+\eps)\alpha^2\tfrac{t}{1+\eps}+\omega\tfrac{t}{1+\eps}} \bigr)
    = C\exp(\alpha^2t)
\]
for all $t>0$. Thus we can apply \cite[Prop\.3.1]{vog15} to obtain
\[
  \|e^{-\alpha\rho} \+\tT(t)\+ e^{\alpha\rho}\|_{2\to2}
  \le \exp\bigl( \alpha^2t - \tfrac{\omega+E_0}{1+\eps}t \bigr)
\]
for all $t>0$ and hence
\[
  e^{-\omega t} \|e^{-\alpha\rho} \+T(t)\+ e^{\alpha\rho}\|_{2\to2}
  = \|e^{-\alpha\rho} \+\tT((1+\eps)t)\+ e^{\alpha\rho}\|_{2\to2}
  \le e^{\alpha^2(1+\eps)t-(\omega+E_0)t}.
\]
Multiplying by $e^{\omega t}$ and letting $\eps\to0$ we obtain the asserted estimate.
\end{proof}

We will work with the weight functions $e^{\pm\alpha\rho_w}$, where $\alpha>0$ and
$\rho_w \from \R^d \to [0,\infty]$ is defined by
\[
  \rho_w(x) := |x-w|,
\]
for given $w\in\R^d$.
Note that $e^{-\alpha\rho_w} \in L_1\cap L_\infty$.
We will need a good estimate for the integral of $e^{-\alpha\rho_w}$.

\begin{lemma}\label{int_est}
\tup{a} For $x>0$ one has $\Gamma(x+\frac12) \le (\rfrac xe)^x \sqrt{2\pi}$.

\tup{b} For $\alpha>0$ one has $\int_{\R^d} e^{-\alpha|y|}\,dy
\le \sqrt2\bigl(\rfrac{2\pi d}{e}\bigr)^{d/2} \alpha^{-d}$.
\end{lemma}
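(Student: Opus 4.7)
The plan is to deduce both claims from Stirling's formula combined with the Legendre duplication identity $\Gamma(x)\Gamma(x+\tfrac12) = 2^{1-2x}\sqrt{\pi}\,\Gamma(2x)$.

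For part~\tup{a}, I would use the precise form of Stirling's formula $\Gamma(z) = \sqrt{2\pi/z}\,(z/e)^z\,e^{\mu(z)}$, where $\mu$ is Binet's remainder. Substituting this into the right-hand side of the duplication identity for both $\Gamma(2x)$ and $\Gamma(x)$, one checks that the factors of $\pi$, the powers of $2$, and the prefactors $\sqrt{1/z}$ all collapse into a single $\sqrt{2\pi}$, leaving the exact identity
\[
  \Gamma(x+\tfrac12) = \sqrt{2\pi}\,(x/e)^x\,e^{\mu(2x)-\mu(x)}.
\]
The claim thereby reduces to $\mu(2x)\le\mu(x)$, which follows immediately from Binet's second integral representation
\[
  \mu(z) = 2\int_0^\infty \frac{\arctan(t/z)}{e^{2\pi t}-1}\,dt,
\]
whose integrand is strictly decreasing in $z>0$ for each fixed $t>0$.

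For part~\tup{b}, passing to polar coordinates gives
\[
  \int_{\R^d} e^{-\alpha|y|}\,dy = \frac{2\pi^{d/2}}{\Gamma(d/2)}\int_0^\infty r^{d-1}e^{-\alpha r}\,dr = \frac{2\pi^{d/2}\,\Gamma(d)}{\Gamma(d/2)\,\alpha^d}.
\]
Another application of the duplication identity yields $\Gamma(d)/\Gamma(d/2) = 2^{d-1}\Gamma((d+1)/2)/\sqrt{\pi}$, so the integral equals $2^d\pi^{(d-1)/2}\Gamma((d+1)/2)/\alpha^d$. Applying part~\tup{a} with $x = d/2$ gives $\Gamma((d+1)/2) \le \sqrt{2\pi}\,(d/(2e))^{d/2}$, and collecting the constants (using $2^d\cdot 2^{-d/2} = 2^{d/2}$ and $\pi^{(d-1)/2}\sqrt{2\pi} = \sqrt2\,\pi^{d/2}$) produces exactly the claimed factor $\sqrt2\,(2\pi d/e)^{d/2}\alpha^{-d}$.

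The sole non-mechanical step is the monotonicity of~$\mu$; everything else is bookkeeping together with a change of variables and two invocations of the duplication identity. If one preferred to avoid Binet's integral by name, the equivalent standard digamma bound $\psi(z) < \log z - 1/(2z)$ on~$(0,\infty)$ serves the same purpose, since $\mu'(z) = \psi(z) - \log z + 1/(2z)$.
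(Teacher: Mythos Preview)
Your argument is correct. Part~\tup{b} is essentially identical to the paper's: both compute the integral in polar coordinates as $2\pi^{d/2}\Gamma(d)/\bigl(\Gamma(d/2)\alpha^d\bigr)$, rewrite $\Gamma(d)/\Gamma(d/2)$ via the duplication formula, and then invoke part~\tup{a} with $x=d/2$.

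The genuine difference lies in part~\tup{a}. You obtain the exact identity $\Gamma(x+\tfrac12)=\sqrt{2\pi}\,(x/e)^x e^{\mu(2x)-\mu(x)}$ from duplication plus Stirling with the Binet remainder, and then reduce the inequality to the monotonicity of $\mu$, which follows at once from Binet's second integral (or, equivalently, from the digamma bound you mention). The paper instead sets $f(x)=x\ln x - x + \ln\sqrt{2\pi} - \ln\Gamma(x+\tfrac12)$ and proves the stronger statement that $f$ is completely monotone: it exhibits a Laplace-type integral $g(x)=\int_0^\infty t^{-1}\bigl(t^{-1}-(2\sinh(t/2))^{-1}\bigr)e^{-tx}\,dt$ with nonnegative integrand, and shows $f=g$ by matching $f''=g''$ (via the classical integral for $(\ln\Gamma)''$) together with the vanishing of both at infinity. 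Your route is shorter and more transparent for the bare inequality; the paper's route yields complete monotonicity of $f$ as a bonus, though that extra strength is not used elsewhere in the paper.
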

\begin{proof}
(a) We have to show that
\[
  f(x) := x\ln x - x + \ln\mkern-1mu\sqrt{2\pi} - \ln\Gamma(x+\tfrac12) \ge 0
\]
for all $x>0$. More strongly, we show that
\[
  f(x)
  = \int_0^\infty \frac1t \left( \frac1t - \frac{1}{2\sinh(t/2)} \right) e^{-tx}\,dt
  =: g(x)
\]
for all $x>0$, which even implies that $f$ is completely monotone.

First observe that the function $t \mapsto \frac1t \bigl( \frac1t - \frac{1}{2\sinh(t/2)}
\bigr)$ is bounded on $(0,\infty)$, so $g$ is defined and $g(x)\to0$ as $x\to\infty$.
Moreover, by Stirling's formula we obtain
\[
  \lim_{x\to\infty} f(x)
  = \lim_{x\to\infty} \bigl( x\ln x - x - x\ln(x+\tfrac12) + (x+\tfrac12) \bigr)
  = \lim_{x\to\infty} \bigl( x\ln\tfrac{x}{x+1/2} + \tfrac12 \bigr) = 0.
\]
According to \cite[6.4.1]{abst72} we have
$(\ln\Gamma)''(x) = \int_0^\infty \frac{t}{1-e^{-t}} e^{-xt}\,dt$ for all $x>0$.
It follows that
\begin{align*}
f''(x)
 &= \smash[t]{ \frac1x - (\ln\Gamma)''(x+\tfrac12)
  = \int_0^\infty \left( 1 - \frac{te^{-t/2}}{1-e^{-t}} \right) e^{-xt}\,dt } \\
 &= \int_0^\infty \frac1t \left( \frac1t - \frac{1}{e^{t/2}-e^{-t/2}} \right)
          t^2 e^{-xt}\,dt = g''(x)
\end{align*}
for all $x>0$. Together with $\lim_{x\to\infty}(f-g)(x) = 0$ we conclude that $f-g=0$.

(b) Assume without loss of generality that $\alpha=1$.
With $\sigma_{d-1}$ denoting the surface measure of the unit sphere we compute
\[
  \int_{\R^d} e^{-|y|}\,dy = \sigma_{d-1} \int_0^\infty r^{d-1}e^{-r}\,dr
  = \frac{2\pi^{d/2}}{\Gamma(d/2)} \Gamma(d).
\]
By \cite[6.1.18]{abst72} and part (a) we obtain
\[
  \Gamma(2x)/\Gamma(x) = (2\pi)^{-1/2} 2^{2x-1/2} \Gamma(x+\tfrac12) \le 2^{2x-1/2} (\rfrac xe)^x
\]
for all $x>0$. It follows that
\[
  \int_{\R^d} e^{-|y|}\,dy \le 2\pi^{d/2} \cdot 2^{d-1/2} \bigl(\tfrac{d}{2e}\bigr)^{d/2}
  = \sqrt2\bigl(\rfrac{2\pi d}{e}\bigr)^{d/2}. \qedhere
\]
\end{proof}

Based on Lemma~\ref{int_est} we can prove our second ingredient in the method of weighted estimates.

\begin{proposition}\label{weighted-est}
Let $\Omega\subseteq\R^d$ be measurable, $\alpha>0$,
and let $B$ be a bounded operator on $L_2(\Omega)$ satisfying
\[
  \|e^{-\alpha\rho_w} \lB e^{\alpha\rho_w}\|_{2\to\infty} \le 1
\]\smallbds
for all $w\in\Omega$. Then
\[
  \|B\|_{\infty\to\infty} \le 2^{1/4} \bigl(\tfrac{\pi d}{2e}\bigr)^{d/4} \alpha^{-d/2}.
\]
\end{proposition}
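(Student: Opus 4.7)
The plan is to unpack the weighted $L_2\to L_\infty$ estimates into an almost-everywhere pointwise bound on $Bf$, and then transfer the weight center $w$ onto the evaluation point $y$. Substituting $g := e^{\alpha\rho_w}h$ in the definition of the $2\to\infty$ norm rewrites the hypothesis as
\[
  \|e^{-\alpha\rho_w}Bg\|_\infty \le \|e^{-\alpha\rho_w}g\|_2
\]
for every $w\in\Omega$ and every $g\in L_2(\Omega)$.

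Fix $f\in L_2(\Omega)\cap L_\infty(\Omega)$. Using $|f|\le\|f\|_\infty$ and Lemma~\ref{int_est}(b) applied with $\alpha$ replaced by $2\alpha$, the right-hand side is bounded uniformly in $w$:
\[
  \|e^{-\alpha\rho_w}f\|_2^2 \le \|f\|_\infty^2\int_{\R^d}e^{-2\alpha|x-w|}\,dx \le \|f\|_\infty^2\cdot\sqrt 2\,(2\pi d/e)^{d/2}(2\alpha)^{-d},
\]
which after a short rearrangement becomes $\|e^{-\alpha\rho_w}f\|_2\le C\|f\|_\infty$ with $C := 2^{1/4}(\pi d/(2e))^{d/4}\alpha^{-d/2}$. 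Combined with the displayed inequality above, this gives, for each $w\in\Omega$, a null set $N_w\subset\Omega$ outside of which $e^{-\alpha|y-w|}|(Bf)(y)|\le C\|f\|_\infty$.

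The crux is to make the exceptional set uniform in $w$. Since each $w$-slice of $E := \{(w,y)\in\Omega\times\Omega : y\in N_w\}$ is null, Fubini implies that $E$ has product measure zero, hence its $y$-slices are null for almost every $y\in\Omega$. For such $y$ one has $|(Bf)(y)|\le C\|f\|_\infty\, e^{\alpha|y-w|}$ for almost every $w\in\Omega$. The Lebesgue density theorem, applied to the measurable set $\Omega$, shows that a.e.\ $y\in\Omega$ is a point of density of $\Omega$, so every ball around $y$ meets $\Omega$ in a set of positive measure; in particular there exist admissible $w\in\Omega$ with $|y-w|$ arbitrarily small, and letting this distance tend to $0$ yields $|(Bf)(y)|\le C\|f\|_\infty$.

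This proves $\|Bf\|_\infty\le C\|f\|_\infty$ for $f\in L_2\cap L_\infty$. A routine truncation argument (take $f_n := f\mathbf{1}_{\{|x|\le n\}}$ so that $f_n\to f$ in $L_2$, whence $Bf_n\to Bf$ pointwise a.e.\ along a subsequence) extends the bound to all $f\in L_\infty(\Omega)$, giving the claim. The principal difficulty is the middle step: the weighted hypothesis is parametrised by $w$ ranging over the merely measurable set $\Omega$ and one cannot simply set $w=y$; the combination of Fubini with Lebesgue density is what safely realises this limit.
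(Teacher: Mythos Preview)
Your argument is correct and follows the same strategy as the paper: bound $\|e^{-\alpha\rho_w}Bf\|_\infty$ uniformly in $w$ via the hypothesis and Lemma~\ref{int_est}(b), then let the weight centre $w$ approach the evaluation point. The paper compresses your Fubini/density passage into the single line ``Observe that $\|Bf\|_\infty=\sup_{w\in\Omega}\|e^{-\alpha\rho_w}Bf\|_\infty$'', from which the conclusion is immediate. Justifying this identity for merely measurable $\Omega$ does require a density-point argument, so you have in fact spelt out a step the paper leaves implicit; but the Fubini detour is avoidable: once $\|e^{-\alpha\rho_w}Bf\|_\infty\le C\|f\|_\infty$ holds for every $w\in\Omega$, assume for contradiction that $\{|Bf|>C\|f\|_\infty\}$ has positive measure and choose $w$ to be a Lebesgue density point of that set to get $\|e^{-\alpha\rho_w}Bf\|_\infty>C\|f\|_\infty$.

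Two minor remarks. First, for the Fubini step one should fix a representative $g$ of $Bf$ and take $N_w:=\{y:e^{-\alpha|y-w|}|g(y)|>C\|f\|_\infty\}$, so that $E$ is visibly measurable; as written, $N_w$ is only determined up to null sets and measurability of $E$ is not automatic. Second, the final truncation ``extends the bound to all $f\in L_\infty(\Omega)$'' overshoots: $B$ is only given on $L_2$, so $Bf$ is undefined for general $f\in L_\infty\setminus L_2$, and $f_n\to f$ in $L_2$ fails. The bound on $L_2\cap L_\infty$ is already what $\|B\|_{\infty\to\infty}\le C$ means here (and is where the paper stops).
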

\begin{proof}
Let $f\in L_2(\Omega)\cap L_\infty(\Omega)$ have bounded support.
Observe that $\|Bf\|_\infty = \sup_{w\in\Omega} \|e^{-\alpha\rho_w\-} B f\|_\infty$.
For all $w\in\Omega$ we can use the assumption to estimate
\[
  \|e^{-\alpha\rho_w\-} B f\|_\infty
  \le \|e^{-\alpha\rho_w}\- f\|_2
  \le \|e^{-\alpha\rho_w} \|_2 \|f\|_\infty\,.
\]
By Lemma~\ref{int_est}(b) we have
\[
  \|e^{-\alpha\rho_w} \|_2^2 = \smash[t]{\int_\Omega e^{-2\alpha|y|}\,dy}
  \le \sqrt2\bigl(\tfrac{\pi d}{2e}\bigr)^{d/2} \alpha^{-d},
\]
so it follows that $\|Bf\|_\infty
\le 2^{1/4} \smash{\bigl(\frac{\pi d}{2e}\bigr)^{d/4}} \alpha^{-d/2} \|f\|_\infty$.
A simple approximation shows that the same estimate holds for arbitrary
$f\in L_2(\Omega)\cap L_\infty(\Omega)$, which proves the assertion.
\end{proof}

In order to apply Proposition~\ref{weighted-est} in the proof of
Theorem~\ref{linfty-sg}, we need the following weighted estimates
of the free heat semigroup $(e^{t\Delta})_{t\ge0}$ on $\R^d$.

\begin{lemma}\label{free-heat}
Let $w\in\R^d$ and $\alpha>0$. Then
\[
  \|e^{-\alpha\rho_w} e^{t\Delta}\+ e^{\alpha\rho_w}\|_{2\to2} \le e^{\alpha^2t}, \quad
  \|e^{-\alpha\rho_w} e^{t\Delta}\+ e^{\alpha\rho_w}\|_{2\to\infty}
  \le \smash{\bigl(1+\tfrac1\beta\bigr)^{d/4}} (8\pi t)^{-d/4} e^{(1+\beta)\alpha^2t}
\]
for all $t,\beta>0$.
\end{lemma}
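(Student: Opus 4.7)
The plan is to prove the two estimates separately, both by working with the kernel of the free heat semigroup $p_t(x,y) = (4\pi t)^{-d/2}\exp(-|x-y|^2/(4t))$ conjugated by the weights $e^{\pm\alpha\rho_w}$. Throughout I will use the key fact that $\rho_w$ is $1$-Lipschitz, so $|\nabla\rho_w|=1$ a.e.\ and, by the reverse triangle inequality,
\[
  |y-w|-|x-w| \le |x-y| \qquad (x,y\in\R^d).
\]
Consequently the kernel $K_t(x,y) := e^{-\alpha|x-w|+\alpha|y-w|} p_t(x,y)$ satisfies
\[
  0 \le K_t(x,y) \le (4\pi t)^{-d/2} \exp\bigl(\alpha|x-y| - \tfrac{|x-y|^2}{4t}\bigr),
\]
a bound depending only on $|x-y|$, which is the input for both estimates.

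For the first (the $L_2\to L_2$ estimate) I would use the classical Davies--Gaffney argument, specialized to the Lipschitz function $\phi:=\alpha\rho_w$ with $|\nabla\phi|\le\alpha$. Namely, for $f\in C_c^\infty(\R^d)$ set $u(t):=e^{t\Delta}e^{\phi}f$ and $v(t):=e^{-\phi}u(t)$, and compute
\[
  \tfrac{d}{dt}\|v(t)\|_2^2
  = 2\Re\int e^{-2\phi}\bar u\+\Delta u
  = -2\int e^{-2\phi}|\nabla u|^2 + 4\Re\int e^{-2\phi}\bar u\,\nabla\phi\cdot\nabla u.
\]
Applying Cauchy--Schwarz and AM--GM to the cross term, together with $|\nabla\phi|\le\alpha$, gives $\tfrac{d}{dt}\|v(t)\|_2^2 \le 2\alpha^2\|v(t)\|_2^2$, so Gronwall yields $\|v(t)\|_2 \le e^{\alpha^2 t}\|f\|_2$. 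A standard density argument (applied to $f$ such that $e^\phi f\in L_2$, which is what the weighted norm refers to) completes the proof. The only mild subtlety here is that $e^{\phi}$ is unbounded, so the identity must be interpreted on the dense subspace built into the definition of $\|\,\cdot\,\|_{2\to 2}$.

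For the second (the $L_2\to L_\infty$ estimate) I would work directly from the kernel bound. The key step is to split the linear-in-$|x-y|$ term by AM--GM: for any $\eta\in(0,1)$,
\[
  \alpha|x-y| \le \tfrac{\alpha^2 t}{\eta} + \tfrac{\eta\,|x-y|^2}{4t},
\]
so
\[
  K_t(x,y) \le (4\pi t)^{-d/2}\exp\bigl(\tfrac{\alpha^2 t}{\eta}\bigr)\exp\bigl(-(1-\eta)\tfrac{|x-y|^2}{4t}\bigr).
\]
Taking the $L_2$-norm in $y$ gives, via the Gaussian integral $\int_{\R^d}e^{-(1-\eta)|z|^2/(2t)}dz = (2\pi t/(1-\eta))^{d/2}$,
\[
  \|K_t(x,\cdot)\|_2 \le (8\pi t)^{-d/4}(1-\eta)^{-d/4}\exp\bigl(\tfrac{\alpha^2 t}{\eta}\bigr),
\]
and since $\|e^{-\alpha\rho_w}e^{t\Delta}e^{\alpha\rho_w}\|_{2\to\infty} = \sup_x \|K_t(x,\cdot)\|_2$ we obtain the claim by setting $\eta := 1/(1+\beta)$, which gives $1/\eta = 1+\beta$ and $(1-\eta)^{-1} = 1+1/\beta$. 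No real obstacle is expected here; the main choice is the parameter $\eta$, which is dictated by the form in which the conclusion is phrased (with $\beta$ replacing $\eta$ via the above substitution). The interplay between these two estimates — a sharp exponential rate in the $L_2\to L_2$ bound versus a tunable trade-off between the Gaussian decay and the exponential rate in the $L_2\to L_\infty$ bound — is precisely what will be exploited in the proof of Theorem~\ref{linfty-sg} via the semigroup factorization $e^{-tH} = e^{-(t/2)H}\cdot e^{-(t/2)H}$.
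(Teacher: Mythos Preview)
Your proof is correct. For the $L_2\to L_\infty$ estimate your argument is essentially the paper's: both absorb the linear term $\alpha|x-y|$ into the Gaussian via AM--GM with a free parameter, then read off the $2\to\infty$ norm from the resulting heat-type kernel (the paper phrases this as domination by $k_s$ with $s=\tfrac{1+\beta}{\beta}t$ and then uses $\|e^{s\Delta}\|_{2\to\infty}=(8\pi s)^{-d/4}$, which amounts to your Gaussian integral).

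For the $L_2\to L_2$ estimate, however, you take a genuinely different route. The paper does \emph{not} use the Davies--Gaffney energy computation; instead it first obtains from the same kernel domination the rough bound
\[
  \|e^{-\alpha\rho_w} e^{t\Delta} e^{\alpha\rho_w}\|_{2\to2}
  \le \bigl(\tfrac{1+\beta}{\beta}\bigr)^{d/2} e^{(1+\beta)\alpha^2 t}\,\|e^{s\Delta}\|_{2\to2},
\]
and then upgrades the suboptimal exponent $(1+\beta)\alpha^2$ to the sharp $\alpha^2$ by invoking Proposition~\ref{weighted-est-vog15} (a complex-interpolation result for analytic families, with $E_0=0$). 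Your Gronwall argument is more elementary and self-contained --- it reaches $e^{\alpha^2 t}$ in one step without any interpolation --- whereas the paper's approach stays purely at the kernel level and illustrates how Proposition~\ref{weighted-est-vog15} removes the loss in the exponential rate, which is exactly the mechanism reused for $e^{-tH}$ in the proof of Theorem~\ref{linfty-sg}. (A small aside: the factorization used there is $e^{-tH}=e^{-\eps tH}e^{-(1-\eps)tH}$ with a free $\eps$, not the symmetric split $t/2+t/2$.)
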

\begin{proof}
Let $x,y,w\in\R^d$ and $\alpha,\beta,t>0$.
Let $k_t$ be the convolution kernel of $e^{t\Delta}$.
First observe that
\[
  -\alpha|x-w| + \alpha|y-w| \le \alpha|x-y|
  \le (1+\beta)t\alpha^2 + \tfrac{|x-y|^2}{4(1+\beta)t}\,.
\]
It follows that
\begin{align*}
e^{-\alpha|x-w|} k_t(x-y) e^{\alpha|y-w|}
 &= (4\pi t)^{-d/2} \exp\bigl(-\alpha|x-w|+\alpha|y-w|-\tfrac{|x-y|^2}{4t}\bigr) \\
 &\le (4\pi t)^{-d/2} \exp\bigl(
      (1+\beta)\alpha^2t - \tfrac{\beta}{1+\beta} \tfrac{|x-y|^2}{4t} \bigr) \\
 &= \smash[b]{ \bigl( \tfrac{1+\beta}{\beta} \bigr)^{d/2} }
    e^{(1+\beta)\alpha^2t} k_s(x-y),
\end{align*}\smallbds
with $s = \frac{1+\beta}{\beta}t$. Therefore,
\[
  \|e^{-\alpha\rho_w} e^{t\Delta}\+ e^{\alpha\rho_w}\|_{2\to2}
  \le \smash[t]{ \bigl( \tfrac{1+\beta}{\beta} \bigr)^{d/2} }
      e^{(1+\beta)\alpha^2t} \|e^{s\Delta}\|_{2\to2}\,.
\]
Since $\|e^{z\Delta}\|_{2\to2} \le 1$ for all $z\in\C_+$, the function
$z\mapsto e^{z\Delta}$ satisfies the assumptions of Proposition~\ref{weighted-est-vog15}
with $E_0=0$, and the first assertion follows.

Similarly,
\[
  \|e^{-\alpha\rho_w} e^{t\Delta}\+ e^{\alpha\rho_w}\|_{2\to\infty}
  \le \smash{ \bigl( \tfrac{1+\beta}{\beta} \bigr)^{d/2} }
      e^{(1+\beta)\alpha^2t} \|e^{s\Delta}\|_{2\to\infty}\,.
\]
This implies the second assertion since $\|e^{s\Delta}\|_{2\to\infty} = (8\pi s)^{-d/4}
= \bigl( \tfrac{1+\beta}{\beta} \bigr)^{\mkern-2mu-d/4} (8\pi t)^{-d/4}$.
\end{proof}

Now we are ready to prove the main result of this section.

\begin{proof}[\Proof of Theorem~\ref{linfty-sg}]
We first show the assertion in the case where $E_0=0$ and $a=4$;
then the general assertion is proved by rescaling.

Assumption~\A\ implies that $\|e^{-zH}\|_{2\to2} \le e^{-E_0\Re z} = 1$ for all $z\in\C_+$
and
\[\,
  \|e^{-\alpha\rho_w} e^{-tH} e^{\alpha\rho_w}\|_{2\to2}
  \le Me^{\omega t} \|e^{-\alpha\rho_w} e^{t\Delta} e^{\alpha\rho_w}\|_{2\to2}
  \le Me^{\omega t+\alpha^2t}
  \qquad (w\in\R^d,\ \alpha,t>0),\,
\]
where the last estimate is due to Lemma~\ref{free-heat}.
By Proposition~\ref{weighted-est-vog15} it follows that
\begin{equation}\label{2-2-est}
  \|e^{-\alpha\rho_w} e^{-tH} e^{\alpha\rho_w}\|_{2\to2} \le e^{\alpha^2t}
  \qquad \bigl( w\in\R^d,\ \alpha,t>0 \bigr).
\end{equation}
Applying~\eqref{gub2} and Lemma~\ref{free-heat} again we obtain
\begin{align*}
\|e^{-\alpha\rho_w} e^{-tH} e^{\alpha\rho_w}\|_{2\to\infty}
 &\le Me^{\omega t} \|e^{-\alpha\rho_w} e^{t\Delta} e^{\alpha\rho_w}\|_{2\to\infty} \\
 &\le M(8\pi t)^{-d/4} \bigl(1+\tfrac1\beta\bigr)^{d/4} e^{\omega t+(1+\beta)\alpha^2t}.
\end{align*}
Using the semigroup property and~\eqref{2-2-est}, we deduce for $\eps\in(0,1]$ that
\begin{align*}
  \|e^{-\alpha\rho_w} e^{-tH} e^{\alpha\rho_w}\|_{2\to\infty}
 &\le \|e^{-\alpha\rho_w} e^{-\eps tH} e^{\alpha\rho_w}\|_{2\to\infty} \|e^{-\alpha\rho_w} e^{-(1-\eps)tH} e^{\alpha\rho_w}\|_{2\to2} \\
 &\le M(8\pi\eps t)^{-d/4} \bigl(1+\tfrac1\beta\bigr)^{d/4} e^{\omega\eps t+(1+\beta)\alpha^2\eps t+\alpha^2(1-\eps)t}.
\end{align*}
By Proposition~\ref{weighted-est} it follows that
\begin{align*}
\|e^{-tH}\|_{\infty\to\infty}
 &\le 2^{1/4} \bigl( \tfrac{\pi d}{2e} \bigr)^{d/4} \alpha^{-d/2}
      \cdot M(8\pi\eps t)^{-d/4} \bigl(1+\tfrac1\beta\bigr)^{d/4}
      e^{\omega\eps t+(1+\beta\eps)\alpha^2t} \\
 &=   2^{1/4}M \bigl( \tfrac{d}{16e\eps} \bigr)^{d/4}
      \cdot (\alpha^2t)^{-d/4} \bigl(1+\tfrac1\beta\bigr)^{d/4}
      e^{\omega\eps t+(1+\beta\eps)\alpha^2t}.
\end{align*}
The right hand side in the previous inequality becomes minimal for
$\alpha^2 = \smash{\frac{d/4}{(1+\beta\eps)t}}$.
Then $(\alpha^2t)^{-d/4} = \bigl( \frac4d(1+\beta\eps) \bigr)^{d/4}$
and $(\rfrac1e)^{d/4} e^{(1+\beta\eps)\alpha^2t} = 1$, so
\[
  \|e^{-tH}\|_{\infty\to\infty}
  \le 2^{1/4}M \left( \frac{1+\beta\eps}{4\eps} \bigl(1+\tfrac1\beta\bigr) \right)^{\!d/4}
      e^{\omega\eps t}.
\]
Now the right hand side becomes minimal for $\beta = \eps^{-1/2}$.
Then $\smash{\frac{1+\beta\eps}{4\eps} \bigl(1+\rfrac1\beta\bigr)}
= \frac{(1+\sqrt\eps)^2}{4\eps}$, and we conclude that \vspace{-0.8ex}
\[
  \|e^{-tH}\|_{\infty\to\infty}
  \le 2^{1/4}M \left( \frac{1+1/\sqrt\eps\,}{2} \right)^{\!d/2} e^{\omega\eps t}.
\]

Now we prove the assertion for general $E_0\in\R$ and $a>0$.
Observe that the operator $\tH := \rfrac4a(H-E_0)$ satisfies Assumption~\A\
with $\widetilde E_0 = 0$,\, $\tomega = \rfrac4a(E_0+\omega)$ and $\tilde a = 4$.
Thus we can apply the above to obtain
\[
  \|e^{-\frac a4 t\tH}\|_{\infty\to\infty}
  \le 2^{1/4}M \left( \frac{1+1/\sqrt\eps\,}{2} \right)^{\!d/2} e^{\tomega\eps \frac a4 t}
\]
for all $t\ge0$. The assertion in the general case follows since
$\|e^{-\frac a4 t\tH}\|_{\infty\to\infty} = e^{E_0t} \|e^{-tH}\|_{\infty\to\infty}$
and $\tomega\eps \frac a4 t = \eps(E_0+\omega)t$.
\end{proof}

We conclude this section by explaining how sharp the method of weighted estimates
from Proposition~\ref{weighted-est} is in certain cases.

\begin{remark}
Fix $t>0$ and consider $B=e^{t\Delta}$. Applying Lemma~\ref{free-heat} with $\beta=1$
gives
\[
  \|e^{-\alpha\rho_w} e^{t\Delta} e^{\alpha\rho_w}\|_{2\to\infty}
  \le (4\pi t)^{-d/4} e^{2\alpha^2t}
\]
for all $\alpha>0$ and $w\in\R^d$.
We only need this estimate for $\alpha = \smash{\bigl( \frac{d}{8t} \bigr)^{1/2}}$.
Then $\alpha^2t = \frac d8$, and by Proposition~\ref{weighted-est} we obtain
\[
  \|e^{t\Delta}\|_{\infty\to\infty}
  \le 2^{1/4} \bigl(\tfrac{\pi d}{2e}\bigr)^{d/4} \alpha^{-d/2}
      \cdot (4\pi t)^{-d/4} e^{2\alpha^2t}
  = 2^{1/4} \bigl(\tfrac{d}{8e\alpha^2t}\bigr)^{d/4} e^{2\alpha^2t} = 2^{1/4},
\]
which is quite sharp since $\|e^{t\Delta}\|_{\infty\to\infty} = 1$.
It also follows that in the estimate of Lemma~\ref{free-heat} not much is lost.
\end{remark}

\section{Proof of Theorems~\ref{main-sg} and~\ref{torsion}}\label{sec_proofs}

We first prove bounds for $q(-\Delta_{B_d})$, where $B_d$ is the unit ball in~$\R^d$.

\begin{proof}[\Proof of Lemma~\ref{ball}]
It is easy to show that $(-\Delta_{B_d})^{-1}\ind(x) = \frac1{2d}(1-x^2)$
for all $x\in B_d$, so $\|\Delta_{B_d}^{-1}\ind\|_{\infty} = \frac1{2d}$.
In \cite[Example~5.8]{fmpp07} the estimate $E_0(-\Delta_{B_d}) \ge \frac14 d^2$ is shown.
Now the lower estimate follows from~\eqref{q-formula}:
\[
  q(-\Delta_{B_d}) = E_0(-\Delta_{B_d})\cdot\frac1{2d} \ge \frac d8\,.
\]

It is well-known that $E_0(-\Delta_{B_d}) = j_{(d/2)-1,1}^2$,
where $j_{\nu,1}$ denotes the first positive zero of the Bessel function $J_\nu$.
By \cite{tri49} it follows that $E_0(-\Delta_{B_d}) = \frac14 d^2 + O(d^{4/3})$.
Thus, $q(-\Delta_B) = \frac d8 + O(d^{1/3})$, which implies the upper estimate.
\end{proof}

Theorem~\ref{main-sg} follows from Theorem~\ref{linfty-sg} by an optimization
with respect to $\eps$.

\begin{proof}[\Proof of Theorem~\ref{main-sg}]
Let $t>0$. By Theorem~\ref{linfty-sg} we know that
\[
  \|e^{-tH}\|_{\infty\to\infty}
  \le 2^{1/4}M \left( \frac{1+1/\sqrt\eps\,}{2} \right)^{\!d/2} e^{\eps(E_0+\omega)t-E_0t}
\]
for all $\eps\in(0,1]$. Thus it remains to show that there exists $\eps\in(0,1]$ such that
\begin{equation}\label{aim}
\left( \frac{1+1/\sqrt\eps\,}{2} \right)^{\!d/2} e^{\eps(E_0+\omega)t}
\le \bigl(1+\tfrac{5.56}{d}(E_0+\omega)t\bigr)^{d/4}.
\end{equation}
Setting $x = (E_0+\omega)t/d$ and raising both sides to the power $\frac4d$,
we see that \eqref{aim} is equivalent to
\[
  \left( \frac{1+1/\sqrt\eps\,}{2} \right)^{\!2} e^{4\eps x} \le 1+5.56\+x.
\]

\emph{Case 1:} $x \le \alpha := 0.14$. Then we choose $\eps=1$ and use the inequality
\[
  e^{4x} \le 1 + \rfrac{e^{4\alpha}-1}{\alpha} x \qquad (0\le x\le\alpha),
\]
which is valid due to the convexity of $x \mapsto e^{4x}$.
Now the assertion follows since $\rfrac{e^{4\alpha}-1}{\alpha} < 5.4$.

\emph{Case 2:} $x > \alpha$. Set $\tau = 4e^{-4\alpha}-1 \,(\,>0)$
and choose $\eps=\rfrac{\alpha}{x} \,(\,<1)$. Then
\begin{align*}
\left( \frac{1+1/\sqrt\eps\,}{2} \right)^{\!2} e^{4\eps x}
 &= \left( 1 + \frac{2}{\sqrt\eps} + \frac1\eps \right) \frac{1}{4e^{-4\alpha}}
\\[0.3ex plus 0.2ex]
 &\le \bigl( 1 + \tau + \rfrac{1}{\tau\eps} + \rfrac1\eps \bigr) \tfrac{1}{\tau+1}
    = 1 + \rfrac{1}{\tau\eps} = 1 + \rfrac{1}{\tau\alpha}x,
\end{align*}
and the assertion follows since $\rfrac{1}{\tau\alpha} < 5.56$.
\end{proof}

The proof of Theorem~\ref{torsion} is also based on an optimization with respect to $\eps$;
however, the required estimations are more involved.

\begin{proof}[\Proof of Theorem~\ref{torsion}]
Recall that the lower bound $q(H) \ge 1$ holds by duality and interpolation.

Let $\eps\in(0,1)$, and choose $t_0>0$ such that
\begin{equation}\label{t0-def}
  2^{1/4} \left( \frac{1+1/\sqrt\eps\,}{2} \right)^{\!d/2} e^{-(1-\eps)E_0t_0} = 1.
\end{equation}
The assumption on $H$ implies
$\|e^{-tH}\|_{\infty\to\infty} \le  \|e^{t\Delta}\|_{\infty\to\infty} = 1$
for all $t\ge0$.
Then by the resolvent formula, Theorem~\ref{linfty-sg} and~\eqref{t0-def} we obtain
\begin{align*}
\|H^{-1}\|_{\infty\to\infty}
 &\le \int_0^\infty \|e^{-tH}\|_{\infty\to\infty}\,dt \\
 &\le t_0 + \int_{t_0}^\infty
      2^{1/4} \left( \frac{1+1/\sqrt\eps}{2} \right)^{\!d/2} e^{-(1-\eps)E_0t} \, dt
 = t_0 + \frac1{(1-\eps)E_0}\,.
\end{align*}\smallbds
Thus,
\[
  q(H) = E_0\|H^{-1}\|_{\infty\to\infty} \le E_0t_0 + \frac1{(1-\eps)}\,.
\]
To prove the upper bound, we now show that
\begin{equation}\label{torsion-aim}
  (1-\eps)E_0t_0 + 1 \le (1-\eps) \bigl( \tfrac d8 + c\sqrt{d} + 1 \bigr)
  \avoidbreak
\end{equation}
for a suitable choice of $\eps$.

Set $\gamma := \frac85 c\+$; then $\gamma < \frac85 \cdot 0.61 <1$,\,
$c = \frac58 \gamma$ and $1+\frac14\ln2 = \frac{16}{5} c^2 = \frac54 \gamma^2$.
Further set $x := \frac{\gamma}{\sqrt d} \,(\,<1)$ and choose $\eps := \frac1{(1+2x)^2}$.
Then $\eps\in(0,1)$ as required.
By the choice of~$t_0$ in~\eqref{t0-def}, the left hand side in~\eqref{torsion-aim} equals
\[
  \frac14\ln2 + \frac{d}{2} \ln \frac{1+1/\sqrt\eps}{2} + 1
  = \frac54 \gamma^2 + \frac{d}{2} \ln \frac{1+(1+2x)}{2}
  = \frac54 d\mkern1.5mu x^2 + \frac{d}{2} \ln(1+x),
\]
whereas the right hand side equals
\[
  \left( 1 - \frac1{(1+2x)^2} \right) \cdot
  \frac d8 \- \left( 1 + \frac{8c}{\sqrt d} + \frac8d \+ \right)
  = \frac{x+x^2}{(1+2x)^2} \cdot \frac d2 \bigl( 1 + 5x + \tfrac{8}{\gamma^2} x^2 \bigr),
\]
so that it remains to prove the inequality
\[
  \frac54 d\mkern1.5mu x^2 + \frac{d}{2} \ln(1+x)
  \le \frac{x+x^2}{(1+2x)^2} \cdot \frac d2 \bigl( 1 + 5x + \tfrac{8}{\gamma^2} x^2 \bigr).
\]

Since $\frac{8}{\gamma^2} = 10/(1+\frac14\ln 2) \approx 8.523 > 8.5$, it suffices to show
\[
  \frac52 x^2 + \ln(1+x)
  \le \frac{x+x^2}{(1+2x)^2} \cdot \bigl( 1 + 5x + 8.5\+ x^2 \bigr) =: g(x)
\]
for all $x\in[0,1]$. A straightforward computation yields
\[
  g(x) - \frac52 x^2 = x - \frac{x^2}{2} + \frac{x^3}{2} \cdot \frac{3+x}{(1+2x)^2}
                   \ge x - \frac{x^2}{2} + \frac{x^3}{2} \cdot \frac{4/5}{1+x}\,.
\]
(The last inequality simplifies to $11+4x \ge 11x^2$, which is trivial for $x\in[0,1]$.)
Thus the assertion follows if
\[
  f(x) := x - \frac{x^2}{2} + \frac{x^3}{2} \cdot \frac{4/5}{1+x} - \ln(1+x) \ge 0.
\]
Observe that $f(0)=0$. Now another straightforward computation yields
\[
  f'(x) = \frac{1}{5(1+x)^2} (x^2-x^3) \ge 0
\]
for all $x\in[0,1]$, which completes the proof.
\end{proof}

\subsection*{Acknowledgment}

The author would like to thank Michiel van den Berg for the introduction to the topic
and for helpful discussions.

\end{document}